\newcommand{\f}{\widetilde{f}}
\newcommand{\bbE}{{\ensuremath{\mathbb E}}}
\newcommand{\bbN}{{\ensuremath{\mathbb N}}}
\newcommand{\Pro}{{\ensuremath{\mathbb P}}}
\newcommand{\g}{\gamma}
\newcommand{\la}{\lambda}
\newcommand{\bbQ}{{\ensuremath{\mathbb Q}}}
\newcommand{\R}{{\ensuremath{\mathbb R}}}
\newcommand{\bi}{E_{J,n}}
\newcommand{\bll}{E_{J,l}}
\newcommand{\bp}{E_J}
\newcommand{\ai}{A_{J,n}}
\newcommand{\Al}{A_{J2^{l+1}}}
\newcommand{\Bl}{B_{J2^{l+1}}}
\newcommand{\wy}{\frac{1}{1-\gamma}}
\newcommand{\N}{{\ensuremath{\mathbb N}}}
\newcommand{\ee}{e^{n+k}-e^n}
\providecommand{\keywords}[1]{\textbf{\textit{Keywords: }} #1}
\providecommand{\klas}[1]{\textbf{\textit{AMS MSC 2010: }} #1}
\newtheorem{theorem}[subsection]{Theorem}
\newtheorem{lemma}[subsection]{Lemma}
\newtheorem{proposition}[subsection]{Proposition}
\newtheorem{fact}[subsection]{Fact}
\newtheorem{corollary}[subsection]{Corollary}
\newtheorem{remark}[subsection]{Remark}
\begin{document}
\title{Extremal particles in branching processes}

\author{Rafa{\l} Meller\thanks{The research was supported by the Foundation for Polish Science  under the grant HOMING PLUS/2012-6/7.}}
\date{}
\maketitle

\begin{abstract}
The purpose of this study is to investigate the behavior of extremal particle in a spatial branching process on $\mathbb{R}$ with the heavy-tailed compound Poisson process motion and inhomogeneous potential.
\end{abstract}
\klas{60J80}
\keywords{Branching process, Galton Watson process, Poisson process, L\'e{}vy process}

\section{Introduction}
The problem of examining the position of maximal particle in the
branching processes has drawn a lot of research attention for a long time. The case of constant intensity of birth is quite well understood. A groundbreaking work on this, motivated by biological applications, was done in the 1930's by Kolmogorov-Petrovsky-Piskounov and by Fisher. They considered a model in which particles move accordingly to the Brownian motion and branch with a constant intensity $\la=1$. When a particle branches it is replaced by two independent particles (we will call this model BBM). They showed that if $M(t)$ is a position of top-most-one particle then $M(t)/\sqrt{t} \rightarrow \sqrt{2}$ a.s. Bramson \cite{bram} (see also \cite{Roberts} for a simpler proof) improved that result by showing that a median $m(t)$ of $M$ satisfies $m(t)=\sqrt{2}t-\frac{3}{2\sqrt{2}} \log(t)+O(1)$ as $t \rightarrow \infty$. This is considered to be a classic result. To continue Roberts \cite{Roberts} showed that in BBM we have $\limsup (M(t)-\sqrt{2}t)/\log(t)=-3/(2\sqrt{2})$ and $\liminf (M(t)-\sqrt{2}t)/\log(t)=-1/(2\sqrt{2})$. A modification of BBM was considered in \cite{cat} (local time decides about breeding) and also the nontrivial limit $M(t)/t$ is identified. \newline
Contrary to a constant birth rate, the case of inhomogeneous potential of branching is much less understood. There are only a few papers on that topic. Harris and Harris \cite{HarHar} studied model in which particles move accordingly to the Brownian motion and reproduce with a branching potential $\la(x)=\beta |x|^p$, $\beta>0$, $p \in (0,2]$ (for $p>2$ the model explodes in a finite time). They proved for $p \in (0,2)$ that
$\lim (M(t))/t^{b(p)}=a(p,\beta)$ where $b(p)=2/(2-p)$ and $a(p,\beta)>0$ (they identified the constant $a$). It is also presented that $\lim (\log M(t))/t =\sqrt{2\beta}$ a.s. if $p=2$. A similar model (with a random walk instead of Brownian motion) was considered in \cite{Boch1} and a similar result was obtained.

Two related spatial branching models with inhomogeneous potential are considered in this paper. In the second section a model with particles evolving on $\N$ is introduced (therefore it is called a discrete model). The intensity of branching for a particle having a position $J$ is assumed to be $J^\gamma$ with $\gamma\in(0,1)$. Particles migrate between lines according to a Markov process with geometric tails. This model is just a tool to investigate the process mention in the abstract (it is not interesting itself).
In the second model it is assumed that particles move in $\R$ according to a heavy-tailed compound Poisson process and branch with inhomogeneous breeding potential $(\log x)^\g$, $\g \in (0,1)$. We do not know about any papers studying the position of maximal particle in the branching process with inhomogeneous potential and discontinuous L\'e{}vy motion. We suspect that the methods developed in this paper will be useful to study a generalization of this model in which particles evolve according to an arbitrary heavy-tailed L\'e{}vy process.

\section{Discrete model}
\subsection{\label{model}Model and result} %\addtocounter{subsection}{-1} \refstepcounter{subsection} \label{model}
We consider a system of  particles with a spatially inhomogeneous branching rate on $\bbN$. The branching process is initialized with a single particle in a position $1$. If a particle has the position $J\in \bbN$ (dubbed as '$J$-th line') it has assigned independent clocks $Z_0,Z_1,Z_2,...$  where $Z_0 \sim Exp(J^\gamma), \ Z_k \sim Exp\left( \beta C^{J+k}\right)$ for $k\geq 1$ [$\beta>0$, $C, \gamma \in (0,1) $ are the parameters of the model and $Exp(\vartheta)$ denotes
the exponential distribution with p.d.f $\vartheta \exp(-\vartheta x)\textbf{1}_{[0,\infty)}(x)$]. 
 Define $T=\min (Z_1,Z_2,...)$. The routine calculations gives $T \sim Exp\left(\frac{ \beta C^{J+1}}{1-C} \right)$. If $T > Z_0$ then the particle produces an additional particle on the $J$-th line after time $Z_0$. If $T=Z_k<Z_0$ then it jumps to the $(J+k)$-th line after time $Z_k$. All particles are independent. After time $T\wedge Z_0$ the particle (or particles if $T>Z_0$ ) receives new independent clocks. 

The objective is to study the displacement of a top-most-one particle.

\begin{remark} Contrary to the well known branching processes the mechanism of jumps depends on the position of a particle. By definition of the model, a particle cannot jump to a lower level than that on which it is. Moreover the parameter $\beta$ will have no impact on the asymptotic of the model. However, this extra parameter will be needed in Section 3, when we will refer to the simpler model.
\end{remark}

Let $X(t)=(X_1(t),X_2(t),\ldots)$ where $X_J(t)$ is a number of particles at time $t$ on the $J$-th line. By definition of the model $\Pro(X_1(0)=1,X_J(0)=0$ for $J=2,3\ldots$)=1. Set 
\begin{equation}
M(t)=\max \left\{J \in \bbN \ | \ X_J(t)>0 \right\} \label{M}
\end{equation}
the position of highest particle at time $t$. 
The following theorem will be shown, which gives information about rough asymptotic of top-most-one particle.
\begin{theorem}\label{th1}
There exist constants $K_1,K_2>0$ such that
$$K_1 \geq \limsup_{t\rightarrow \infty} \frac{M(t)}{ t^{\frac{1}{1-\gamma}}} \geq  \liminf_{t\rightarrow \infty}\frac{M(t)}{ t^{\frac{1}{1-\gamma}}} \geq K_2 \textrm{ a.s.}$$
$K_1=\left(\frac{1}{- \log C}\right)^{\frac{1}{1-\gamma}} $ and $K_2=\left(\frac{2^{1-\gamma}-1}{-4 \log C}\right)^{\frac{1}{1-\gamma}}$ can be taken.
\end{theorem}
%%%%%%%%%%%%%%%%%%%%%%%%%%%%%%%%%%%%
%%%%%%%%%%%%%%%%%%%%%
%%%%%%%%%%%%%%%%%
%%%%%%%%%%%%%%%%%
We conclude this section with the following simple and very intuitive fact.
\begin{fact}\label{doinf}
The following holds
$$\lim_{t\rightarrow \infty } M(t)= \infty \textrm{ a.s.}$$
\end{fact}
\begin{proof}
Since $M(t)$ is nondecreasing it is enough to show that $M(t)$ converges in probability. The definition of model and the lack of memory property imply that the time a particle need to jump from the $J'$-th line ($J'<J$) to the $J-th$ line or higher is distributed like $Z(J)\sim Exp(\frac{\beta C^J}{1-C})$ (we do not exclude that particle will need to performs several jumps to reach such a line or reproduces before it reaches the $J$-th line). Thus,
$$\Pro\left(M(t)\geq n\right)\geq \Pro(Z(n)\leq t)\rightarrow 1$$
(we look only at the first particle in the model and check whether it reaches $n-th$ line or higher before time $t$).
\end{proof}

\subsection{General facts}
In this section facts used in the main proof are gathered.

Let us denote the law of Galton-Watson process with an intensity of birth $\lambda$ and an intensity of death $\mu$ by $GW(\lambda,\mu)$. Particles always split into two offspring (see chapter 3, page 102 \cite{A-N} for the details). Furthermore,  $GW(\lambda,0)$ is shortcuted to $GW(\lambda)$. For $H(t)$ being a $GW(\lambda,\mu)$ process we define an extinction event
$$
Ext(H)= \{\exists_{t\geq 0} H(t) =0\}.
$$

\begin{proposition}\label{3.0} (\cite{A-N}, pp. 108-111)
Let H(t) be a $GW(\lambda,\mu)$ process. Then its generating function has the following form
\begin{equation}
\bbE s^{H(t)}=f(s,t)=\frac{\mu(s-1)-e^{(\mu-\lambda)t}(\lambda s-\mu)}{\lambda(s-1)-e^{(\mu-\lambda)t}(\lambda s-\mu)}.\label{eq:deathBirthProcess}
\end{equation} Consequently, $\bbE H(t)=e^{(\lambda-\mu)t}$. Moreover, $\Pro(Ext(H(t)))=\min\left\{ \frac{\mu}{\lambda},1 \right\}$. 
\end{proposition}

\begin{theorem}\label{2.3}
Let $H(t)$ be $GW(\lambda, \mu)$ process for $\lambda>\mu>0$. $\{\widetilde{H}(t)\}_{t\geq 0}$ is a number of particles of $H(t)$ which have infinite number of descendants (in particular $\widetilde{H}(0) =0$ if the process $H$ becomes extinct). Then conditionally on $(Ext(H))'$ the process $\widetilde{H}(t)$ has a generating function given by
$$\bbE \left(s^{\widetilde{H}(t)}| (Ext(H))'\right)=\f(s,t)=\frac{se^{-(\lambda-\mu)t}}{1-(1-e^{-(\lambda-\mu)t})s}. $$
\end{theorem}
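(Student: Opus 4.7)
My plan is to compute the generating function $\f(t,s)$ directly by reducing to the generating function $f(t,s)$ of the underlying $GW(\lambda,\mu)$ process and then applying Kendall's classical formula. Throughout I take $H_0=1$; the general case follows by the branching property.

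The crucial observation is a conditional thinning: given $H_t=n$, the future evolutions after time $t$ of the $n$ particles at time $t$ are i.i.d.\ copies of $GW(\lambda,\mu)$, each independent of everything up to time $t$. By Proposition \ref{2.4} each such future evolution becomes extinct with probability $p:=\mu/\lambda$ and survives forever with probability $q:=1-p=(\lambda-\mu)/\lambda$. Hence, conditionally on $H_t=n$, $\bar H_t$ is $\mathrm{Binomial}(n,q)$, and
\[
\E s^{\bar H_t} \;=\; \E(qs+p)^{H_t} \;=\; f(t,\,qs+p).
\]
Furthermore $\{\bar H_t=0\}$ coincides (up to a null set) with $Ext(H)$, since a particle has no immortal descendants at time $t$ iff its entire lineage dies out. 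Using $\Pro(Ext(H))=p$ we get
\[
\f(t,s) \;=\; \frac{\E s^{\bar H_t}-p}{q} \;=\; \frac{f(t,\,qs+p)-p}{q}.
\]

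The last step is the simplification. I would substitute Kendall's formula for the birth-and-death process (Athreya--Ney, p.~109),
\[
f(t,s) \;=\; \frac{\mu(1-s)-(\mu-\la s)e^{-(\la-\mu)t}}{\la(1-s)-(\mu-\la s)e^{-(\la-\mu)t}},
\]
and carry out the algebra. Using $1-(qs+p)=q(1-s)$ and $\mu-\la(qs+p)=-s(\la-\mu)$, the numerator and denominator both factor through $\la-\mu$, and after subtracting $p$ and dividing by $q$ the expression collapses exactly to
\[
\frac{se^{-(\la-\mu)t}}{1-(1-e^{-(\la-\mu)t})s},
\]
which is the generating function claimed (and, by Proposition \ref{2.0}, the generating function of a Yule process of rate $\la-\mu$).

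The only genuine obstacle is verifying the binomial-thinning step cleanly: one must make sure that the event of \emph{a particle having infinite descendants} is measurable with respect to the future of that particle alone (so that the conditional independence at time $t$ applies), and that ``$\bar H_t=0$'' really equals $Ext(H)$ almost surely. Both are standard and follow from the strong branching property together with Proposition \ref{2.4}. The algebraic simplification from $f(t,qs+p)$ to the Yule formula is mechanical once the substitution is set up correctly.
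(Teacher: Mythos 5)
Your proposal is correct and follows essentially the same route as the paper: both reduce the problem to the identity $\f(s,t)=\bigl(f((1-q)s+q,t)-q\bigr)/(1-q)$ with $q=\mu/\la$ the extinction probability, and then simplify Kendall's formula for $f$ algebraically to the Yule form. The only difference is that you derive that identity yourself via the binomial-thinning/branching-property argument, whereas the paper simply cites it from Athreya--Ney (Theorem 1, pp.~49 and 110); your derivation and the algebra both check out.
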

\begin{proof}
Denote by $\f(s,t)$ the generating function of process in the question. 
The following equation holds (see \cite{A-N} Theorem 1 p. 49 and Theorem 1 p. 110)
$$
\f(s,t)=\frac{f((1-q)s+q,t)-q}{1-q},
$$
where $q$ is the probability that $H(t)$ becomes extinct. By Proposition \ref{3.0} $q=\frac{\mu}{\lambda}$, thus thesis follows by \eqref{eq:deathBirthProcess} and an elementary calculations.
\end{proof}
%Obliczenie funkcji generujacej
%\begin{align*} 
%\f(s,t)&=\frac{f((1-q)s+q,t)-q}{1-q}=\frac{\frac{\mu\left(\left(1-\frac{\mu}{\la}\right)s+\frac{\mu}{\la}-1\right)-e^{(\mu-\la)t}\left( \left( \la -\mu \right)s+\mu-\mu \right)}{\left( \left( \la -\mu \right)s+\mu-\la \right)-e^{(\mu-\la)t}\left( \left( \la -\mu \right)s+\mu-\mu \right)}-\frac{\mu}{\la}}{1-\frac{\mu}{\la}}\\
%&=\frac{\frac{(\mu\la-\mu^2)s+\mu^2-\mu\la-\la e^{(\mu-\la)t}(\la-\mu)s}{(\la-\mu)(s-1)-e^{(\mu-\la)t}(\la-\mu)s}-\mu}{\la-\mu}=
%\frac{\frac{\mu s-\mu-\la e^{(\mu-\la)t}s}{(s-1)-e^{(\mu-\la)t}s}-\mu}{\la-\mu}\\
%&=\frac{e^{(\mu-\la)t}s(\mu-\la)}{(\la-\mu)(s-1-e^{(\mu-\la)t}s)}=\frac{se^{-(\la-\mu)t}}{1-(1-e^{-(\la-\mu)t})s}.
%\end{align*}
% Stare obliczenia dok&#322
%$$\f(s,t)=\frac{se^{-(\la-\mu)t}}{1-(1-e^{-(\la-\mu)t})s}.$$

 For future use, let us state the following Corollary.
\begin{corollary}\label{nier}
Let $Z$ has the geometric distribution with parameter $e^{-\lambda t}$. Then for any $c>0$ the following inequality holds
$$\Pro(Z \ge c \bbE Z ) \ge  \frac{1}{3^c} \textrm{ , for } t\geq \frac{\log 6}{\lambda}.$$
\end{corollary}
\begin{proof}
The following calculations can be made
\begin{align*}
\Pro(Z\ge c \bbE Z)&=\sum_{n=\lceil c \bbE Z \rceil}^{\infty} e^{-\lambda t}\left( 1- e^{-\lambda t} \right)^{n-1}=\left(1-e^{-\lambda t} \right)^{\lceil c \bbE Z \rceil -1}\\
&\ge \left( \left( 1- e^{-\lambda t} \right)^{e^{\lambda t}}\right)^{c}.
\end{align*}
Since the function $g(u)=(1-u)^{\frac{1}{u}}$ is decreasing on $(0,1)$ Corollary follows easily.
\end{proof}

\subsection{Upper bound for the discrete model}\addtocounter{subsection}{-1}  \refstepcounter{subsection}  \label{rozd}

The goal of this chapter is to prove the bound from above in Theorem \ref{th1}. We start with a following simple observation.
\begin{lemma}\label{smierc}
Let $H(t)$ be a $GW(\lambda,\mu)$ process and $H_d(t)$ is a number of deaths in $[0,t]$. Then $\Pro(H_d(t)\geq 1)\leq t \mu e^{ \lambda t}$.
\end{lemma}
\begin{proof}
Let $H^1(t)=H(t)+H_d(t)$. Then $H^1(t)$ is stochastically smaller than $GW(\lambda)$ process. Moreover $ \textbf{1}_{H_d(t)\ge 1} \leq H_d(t)=H^1(t)-H(t)$. Thus
$$\Pro(H_d(t)\geq 1)\leq \bbE H^1(t)-\bbE H(t) \leq e^{\lambda t} - e^{(\lambda - \mu)t}\leq t \mu e^{\lambda t}$$
where second inequality follows by Proposition \ref{3.0}.
\end{proof}

The following lemma is the key technical result of this section.
\begin{lemma}\label{3.1} For an arbitrary $\varepsilon>0$ and $C_1=-(1-\varepsilon)\log C  $ the following series converges
$$\sum_{J=1}^{\infty} \Pro\left(M(C_1 J^{1-\gamma}) \ge J\right) <\infty.$$
\end{lemma}
\begin{proof}
Let us fix $J$. Define processes $\widetilde{X}(t)=(\widetilde{X}_1(t),\widetilde{X}_2(t),\ldots)$ and $\widetilde{M}(t)$  in the same way as $X(t)$ and $M(t)$ (cf. \eqref{M}) but $\Pro(\widetilde{X}_{J-1}(0)=1,\widetilde{X}_{I}(0)=0 \textrm{ for }I\neq J-1)=1$  (we start with a single particle on the $(J-1)$-th line). Since the process $\widetilde{X}(t)$ starts higher than $X(t)$, $\widetilde{M}(t)$ is stochastically larger than $M(t)$, namely
\begin{equation}
\Pro\left(M(t) \ge J\right) \leq \Pro\left(\widetilde{M}(t) \ge J\right). \label{pierwszacegla}
\end{equation}
% STARE WYJASNENIE
%We consider process $\widetilde{X}_{J-1}(t)$. Particles on the $J-1$-th line jump to the $J$-th line or higher with intensity $\frac{\be C^J}{1-C}$. Therefore, it is easy to see that $\widetilde{X}_{J-1}(t)$ is a $GW\left((J-1)^\g, \frac{\be C^J}{1-C}\right)$ process (we think of jump as a death). By $(\widetilde{X}_{J-1})_d(t)$ we denote the number of deaths in $[0,t]$. Process $\widetilde{X}(t)$ reaches line $J$-th or higher before time $t$ if and only if $(\widetilde{X}_{J-1})_d(t) \geq 1$ . Thus by Lemma \ref{smierc}
Consider the process $\widetilde{X}_{J-1}(t)$ and by $(\widetilde{X}_{J-1})_d(t)$ denote a number of deaths in $[0,t]$ (we identify a jump with a death). It is easy to see that $\widetilde{M}(t)\geq J $ if and only if $(\widetilde{X}_{J-1})_d(t) \geq 1$. Because $\widetilde{X}_{J-1}(t)$ is a $GW\left((J-1)^\gamma, \frac{\beta C^J}{1-C}\right)$ process (cf. definition of model in Section \ref{model} )  Lemma \ref{smierc} gives

$$
\Pro\left(\widetilde{M}(t) \ge J\right)=\Pro \left((\widetilde{X}_{J-1})_d(t) \geq 1 \right)\leq t \frac{\beta C^J}{1-C} e^{  (J-1)^\gamma t}. 
$$
Since $C_1=-(1-\varepsilon)\log C $, the above inequality with substituted $t=C_1 J^{1-\gamma}$ and \eqref{pierwszacegla} proves our Lemma.
\end{proof}

We are ready to prove the bound from above in Theorem \ref{th1}. The Borel-Cantelli Lemma and Lemma \ref{3.1} imply that  $M(C_1 J^{1-\gamma})<J$ for $J$ sufficiently large a.s. Hence
\begin{align*} 
\frac{M(t)}{ t^\wy}&\le \frac{J}{\left(C_1 (J-1)^{1-\gamma}\right)^\wy}=\frac{J}{J-1} C_1^{-\wy} 
\end{align*}
for $t \in [C_1(J-1)^{1-\gamma},C_1J^{1-\gamma})$ and $J$ suffieciently large a.s. which means that $\limsup_{t \rightarrow \infty} \frac{M(t)}{t^{\frac{1}{1-\gamma}}} \leq C_1^{-\wy}$ a.s. Since $\varepsilon>0$ was taken arbitrarily the upper bound in Theorem \ref{th1} holds with $K_1=(-\log C)^{-\wy}$.

\subsection{Lower bound for the discrete model}
In this section certain stopping moments, connected with the $J$-th line appear. If $T_J$ is such a moment and it is fixed then we write
\begin{align*}
\Pro_J(\ \cdot \ )= \Pro( \ \cdot \ | T_J,\ \{X_J(T_J)>0 \} ), 
\end{align*}
where $\Pro( A | T_J,\ \{X_J(T_J)>0 \} )=\bbE_{\Pro(\ \cdot \ |  \{X_J(T_J)>0 \})}(\textbf{1}_A | T_J )$ (conditional expectations with respect to the probability measure $\Pro(\ \cdot \ | \ \{X_J(T_J)>0 \})$).
Let us define the following deterministic times 
\begin{equation}
t_J= C_2J^{1-\gamma}-1 \label{t}
\end{equation}
where $C_2=-(4+\varepsilon) \log C$ for arbitrarily chosen $\varepsilon>0$ and $J$ large enough. They are times when typically the number of particles on the $J$-th line is large enough so that one of them performs~a long jump with high probability. To formalize
\begin{equation}
A_J(T_J)=\left\{X_J(T_J)>0,\  X_J( T_J+t_J) \ge e^{C_2 \frac{J}{2}} \right\} \label{zb} 
\end{equation} 
where $T_J$ is a stopping time. 
We shortcut $A_J(T_J)$ to $A_J$ when it does not lead to misunderstanding.

 We have the following Proposition.
\begin{proposition}\label{2.6} 
Assume that $T_J$ is a stopping time that satisfies the following condition
\begin{equation}
X_J(T_J)>0 \textrm{ on } \{T_J < \infty \}. \label{wlasnosc}
\end{equation}
Then for $J$ large enough
\[
	\Pro_J(A_J) \geq 1 - \frac{2}{J^2}.
\] 
\end{proposition}
\begin{remark}
Formally $\Pro_J(A_J)$ is a random variable, however  it is bounded from below by a deterministic value.
\end{remark}

\begin{proof}
Let us choose any particle on the $J$-th line which has position $J$ at time $T_J$.
Let $X^1_J(T_J+t)$ be a number of particles on the $J-th$ line whose ancestor was that particle. By definition of the model (cf. Section \ref{model}) and the strong Markov property (with respect to the stopping time $T_J$), conditionally on $T_J$, $\{ T_J< \infty \}$ the
process $X^1_J(T_J+t)$ is distributed as $GW(J^\gamma,\beta C^{J+1}/(1-C))$ (at $T_J$ the particle on $J$-th line forgets its history and starts an independent life). 
Define $(Ext(X^1_J))'=\{ X^1_J(T_J+t)>0 \textrm{ for all } t>0 \}$. $\widetilde{X^1_J}(T_J+t)$ is the number of those particles among $X^1_J(T_J+t)$ which have an infinite number of descendents. Since $X_J(T_J+t)\geq \widetilde{X^1_J}(T_J+t)$ the following holds
\begin{align}
\Pro_J(A_J)&\geq \Pro_J ( \widetilde{X^1_J}(T_J+t) \geq e^{C_1 J/2} ) \nonumber \\
&=\Pro_J ( \widetilde{X^1_J}(T_J+t) \geq e^{C_1 J/2} | (Ext(X^1_J))' ) \Pro_J ((Ext(X^1_J))' ).\label{qer}
\end{align}

By Theorem \ref{2.3}  $\widetilde{X^1_J}(T_J+t)$ has, conditionally on
$(Ext(X^1_J))'$, the geometric distribution with a  parameter $e^{-(J^\gamma-\beta C^{J+1}/(1-C) )t}$. Thus its expectation equals to
 $$\bbE_{\Pro_J} \left(\widetilde{X^1_J}(T_J+t_J) \ | \ (Ext(X^1_J))'\right)=e^{\left(C_2 J^{1-\gamma}-1\right)\left(J^\gamma-\frac{\beta C^{J+1}}{1-C} \right)}.$$
So for large enough $J$ we have $\bbE_{\Pro_J} \left(\widetilde{X^1_J}(T_J+t_J) \ | \ (Ext(X^1_J))'\right) \geq J^2 e^{C_2 J/2}$. Therefore, using Corollary \ref{nier} for $J$ large enough,  we get 
\begin{align*}
&\Pro_J\left(\widetilde{X^1_J}(T_J+t_J) \ge e^{C_2 J/2} \ | \ (Ext(X^1_J))'  \right)\\
&\geq \Pro_J\left(\widetilde{X^1_J}(T_J+t_J) \ge \frac{ \bbE_{\Pro_J} (\widetilde{X^1_J}(T_J+t_J)\ | \ (Ext(X^1_J))')}{J^2} \ | \ (Ext(X^1_J))' \right) 
\ge \left( \frac{1}{3} \right)^{\frac{1}{J^2}}. 
\end{align*}
Moreover, by  Proposition \ref{3.0}, for large enough $J$  
$$\Pro_J\left(Ext(X^1_J)\right)=\frac{\frac{\beta C^{J+1}}{1-C}}{J^\gamma} \le C^{J+1}.$$
Hence, from \eqref{qer} $\Pro_J(A_J) \ge \left( \frac{1}{3} \right)^{\frac{1}{J^2}} \left( 1-C^{J+1} \right)$.
The statement of proposition follows now by elementary considerations.
\end{proof}

If $T_J$ is a stopping time then we define a random interval of time
$$I_J(T_J)=\begin{cases}  [T_J+t_J,T_J+t_J+1] &\textrm{ if } X_J(T_J)>0,\ T_J<\infty \\ \emptyset &\textrm{ otherwise} \end{cases}  $$
and
\begin{align}
\alpha_J(T_J)=\Pro(\textrm{a single particle being on the } J-th \textrm{ line at time } T_J+t_J    \nonumber \\
\textrm{will jump to the } 2J \textrm{-th line}\textrm{ in } I_J(T_J) |T_J,\ \{T_J<\infty\},\{X_J(T_J+t_J) > 0\}). \label{a}
\end{align}
In the definition of $\alpha_J(T_J)$ it is not excluded that before a jump a particle can produce some additional particles. If $T_J$ is fixed then  $\alpha_J(T_J)$ is shortened to $\alpha_J$.
\begin{proposition}\label{2.7}
If $J$ is sufficiently large and $T_J$ is a stopping time then $\alpha_J \geq \frac{\beta C^{2J}}{2}$.
\end{proposition}
\begin{proof} Let $Z_1,Z_2,\ldots$ be independent variables such that $Z_k \sim Exp(\beta C^{J+k})$ (cf. the definition of model in Section \ref{model}). Define $\widetilde{Z}=\inf_{k \in \bbN \setminus \{J\}} Z_k $. By calculating the CDF, we get that $\widetilde{Z} \sim Exp(\beta C^{J+1}/(1-C) - \beta C^{2J})$. By definition of the model and the strong Markov property (with respect to the stopping time $T_J+t_J$) we have 
$$\alpha_J \geq \Pro(\widetilde{Z} \geq 1 \geq Z_J)\geq \frac{2}{3} \left(1-\exp(-\beta C^{2J})  \right) \geq \frac{\beta}{2} C^{2J}$$
where two last inequalities are true for large enough $J$.
\end{proof}

To continue let us define 
\begin{align}
B_J(T_J)=\{&T_J<\infty, X_J(T_J)>0,\nonumber \\ 
&\textrm{ in } I_J (T_J)\textrm{ occurs a jump from the } J\textrm{-th to the } 2J \textrm{-th line} \},\label{Bj}
\end{align}
where $T_J$ is a stopping time, $t_J$ is defined in (\ref{t}). As usual $B_J(T_J)$ is shortened to $B_J$ if this does not lead to misunderstanding.

\begin{proposition}\label{2.9} Assume that $T_J$ is a stopping time, which satisfies \eqref{wlasnosc}. Then the following estimate holds for large enough $J$
$$ \Pro_J(B_J\ | \ A_J)\geq 1 - e^{-J}.$$
\end{proposition}
As a direct corollary of this fact and Proposition \ref{2.6} we obtain that for large enough~$J$ 
\begin{equation}
	\Pro_J(A_J \cap B_J ) \geq \left(1-\frac{1}{ J^2}\right) \left(1-e^{-J}\right) \geq 1-\frac{4} {J^2}.\label{eqn:fundamentalEstimate}
\end{equation}

\begin{proof}
Since  $A_J$ implies (see (\ref{zb})) $X_J(T_J+t_J) \geq  e^{C_2 J/2} $ and recall \eqref{a} we see that
\begin{align}
1-\Pro_J(B_J \ |  A_J)\leq (1-\alpha_J)^{ e^{C_2 J/2} }=\left( (1-\alpha_J)^{\frac{1}{\alpha_J}} \right)^{\alpha_J  e^{C_2 J/2} } \leq e^{-\alpha_J  e^{C_2 J/2}}. \label{loc111} 
\end{align}
Proposition \ref{2.7} implies for large enough $J$ (because $C_2=-(4+\varepsilon)\log C$)  
\begin{equation}
\alpha_J e^{C_2 J/2}   \ge \frac{\beta}{2}C^{2J}  e^{C_2 J/2} >J. \label{wzor1}
\end{equation}

Combining \eqref{loc111} and \eqref{wzor1} we obtain
$$1-\Pro_J(B_J \ |  A_J)\leq \left(\frac{1}{e} \right)^{J}$$
what implies the Proposition.
\end{proof}

Set $\bp(T_J)=\bigcap_{l=0}^{\infty}B_{J2^l}(T_J+t_{J,l})$ where $t_{J,l}=(t_J+1)+\ldots+(t_{J2^{l-1}}+1)$, $t_{J,0}=0$. The following proposition gives a reason to the mentioned above definition.

\begin{proposition}\label{2.10} If $T_J$ is a stopping time which fulfills \eqref{wlasnosc} then for any $J$ 
\begin{equation}
\bp(T_J) \subset \left\{ \liminf_{t\rightarrow \infty} \frac{ M(t) }{ t^{\frac{1}{1-\gamma}}} \ge \ \left(\frac{2^{(1-\gamma)}-1}{(1+\varepsilon)C_2} \right)^\wy \right\}=S. \label{S}
\end{equation}
\end{proposition}
\begin{proof} Fix $\omega \in \bp$. 
We choose arbitrary $t>T_J(\omega)$ ( $\omega \in B_J(T_J)$ what ensures that $T_J(\omega) <\infty$) and pick largest $m$ such that:
$$T_J(\omega)+C_2\sum_{l=0}^{m-1} \left(J2^l \right)^{1-\gamma}=T_J(\omega)+t_{J,m}<t.$$

Since $\omega \in B_{J2^m}(T_J+t_{J,m})$ we have that $M(t,\omega) \geq J2^m$ thus:
\begin{equation}
 \frac{ M(t,\omega) }{ t^{\frac{1}{1-\gamma}}} \ge  \frac{ J2^{m}}{ \left(T_J(\omega)+C_2\sum_{l=0}^{m} \left(J2^l  \right)^{1-\gamma}\right)^{\frac{1}{1-\gamma}}}. \label{aaaa}
\end{equation}
For large enough $m$ the right hand side in (\ref{aaaa}) is bigger than
%\begin{align*}
%&\frac{2^{m}}{\left(2C_2 \sum_{i=0}^m (2^i)^{1-\g}\right)^\wy}= \frac{2^{m}}{\left(2C_2\frac{2^{m(1-\g)}-1}{2^{(1-\g)}-1}\right)^\wy}  \\
%&\ge \frac{\left(2^{(1-\g)}-1\right)^\wy}{(2C_2)^\wy}\frac{2^{m}}{2^m}= \left(\frac{2^{(1-\g)}-1}{2C_2} \right)^\wy.
%\end{align*}
\begin{align*}
&\frac{2^{m}}{\left((1+\varepsilon)C_2 \sum_{l=0}^m (2^l)^{1-\gamma}\right)^\wy}\geq \left(\frac{2^{(1-\gamma)}-1}{(1+\varepsilon)C_2} \right)^\wy.
\end{align*}  
So $\omega \in S $ and the proper inclusion holds.
\end{proof}

From the above proposition it is important to establish bounds for the probability of $\bp(T_J)$. The following lemma will be a crucial ingredient of proof of Theorem~\ref{th1}

\begin{lemma}\label{2.13} If $J$ is large enough and $T_J$ fullfils \eqref{wlasnosc} then
$$\Pro_J( \bp(T_J)  ) \geq e^{-\frac{32}{3J^2}}.$$
\end{lemma}
\begin{proof} Additionally denote  
$$\ai(T_J) =\bigcap_{l=0}^{n} A_{J2^l}(T_J+t_{J,l}),\  \bi(T_J) =\bigcap_{l=0}^{n} B_{J2^l}(T_J+t_{J,l}).$$ %By the continuity of probability we have that 
%\begin{equation}
%\Pro_J(\bp   ) = \lim_{n \rightarrow \infty} \Pro_J(\bi )\ge \lim_{n \rightarrow \infty}\Pro_J(\bi \cap \ai  ). \label{ala}
%\end{equation}
Observe that for $l\leq n$ $\bll(T_J)$ implies $X_{2^lJ}(T_J+t_{J,l})>0$, thus by \eqref{eqn:fundamentalEstimate} and the strong Markov property (with respect to the stopping time $T_J+t_{J,l}$) the following holds
\begin{equation}\label{wl1}
	\Pro_J(\Bl(T_J+t_{J,l}) \cap \Al(T_J+t_{J,l})| A_{J,l}(T_J) \cap \bll(T_J) ) \geq \left(1-\frac{4}{(J2^{l+1})^2}\right).
\end{equation}
Applying the chain rule for conditional probability to \eqref{wl1} and logarithming both sides the following inequality is obtained for $J$ sufficiently large
\begin{equation}\label{aa}
	\log  \Pro_J(\bi(T_J) \cap \ai(T_J)  ) \geq \sum_{l=0}^{n} \log\left(1-\frac{4}{(J2^l)^2}\right) \geq -\sum_{l=0}^{n} \frac{8}{(J2^l)^2},
\end{equation} 
where the last estimate follows by an elementary inequality $2t\leq \log(1+t)$ valid for $t<0$ sufficiently close to $0$.
By the continuity of probability and \eqref{aa} we get
\[
	\Pro_J(\bp(T_J)  )\geq \lim_{n\rightarrow \infty }\exp\left( -\sum_{l=0}^n \frac{8}{(J2^l)^2}  \right).
\]
The thesis follows by an elementary consideration.
\end{proof}

Now we are ready to prove the bound from the below in Theorem \ref{th1}. For a fixed $K\in \bbN$ define the following stopping time
\begin{equation}
T_K=\inf \{t> 0\ | \ \exists_{J\geq K} \ X_J(t)>0 \}. \label{Y}
\end{equation}
Fact \ref{doinf} implies that $T_K$ is well defined.
Recall $S$ defined in \eqref{S}. Proposition \ref{2.10} implies that
\begin{align*}
\Pro(S)&=\sum_{J=K}^\infty \Pro(S|X_J(T_K)>0)\Pro(X_J(T_K)>0)\\
&\geq\sum_{J=K}^\infty \bbE \Pro(E_J(T_K)|T_K,\{T_K<\infty,X_J(T_K)>0\}) \Pro(X_J(T_K)>0)\\
&\geq e^{-\frac{32}{3K^2}}\xrightarrow{K\rightarrow \infty} 1
\end{align*}
where the last inequality follows by Lemma \ref{2.13}.
Thus $\Pro(S)=1$. Since $\varepsilon>0$ was arbitrary, the lower bound in Theorem \ref{th1} is true.

%For any $n \in \bbN$, $L_n$ denotes a number of first populated line among lines $n,n+1,n+2\ldots$. It is easy to see that $M(t) \xrightarrow{a.s} \infty$ thus $L_n$ is well-defined. We write
%$$\Pro(S)=\sum_{J=n}^{\infty} \Pro(S \ | L_n=J)\Pro(L_n=J) \ge \sum_{J=n}^{\infty} \Pro(\overline{B_{J}} \ | \ L_n=J) \cdot \Pro(L_n=J)$$
%cause by Proposition \ref{2.10} we know that $\overline{B_{J}} \subset S$.

% Let us choose an arbitrary small $\e>0$. The condition $L_n =J $ implies $T_J < \infty$ thus, by Lemma~\ref{2.13} and the strong Markov property $ \lim_{J \rightarrow \infty} \Pro(\bp \ | \ L_n=J, T_J) \geq \exp(-12 J^{-2}) $  what gives $ \lim_{J \rightarrow \infty} \Pro(\bp \ | \ L_n=J) \geq \exp(-12 J^{-2}) $. As a result  $\inf_{J>n} \Pro(\bp \ | \ L_n=J) \geq 1-\e$ for large enough $n$. So

%$$\Pro(S) \ge \sum_{J=n}^{\infty} (1-\e) \Pro(L_n=J)=1-\e.$$
%As $\e$ is arbitrary, $\Pro(S)=1$.

\section{Branching model with heavy-tailed Poisson process and inhomogeneous potential of branching} 
\subsection{Model and notation} \addtocounter{subsection}{-1} \refstepcounter{subsection}  \label{aaa}

In this chapter a system of branching process in inhomogeneous breeding potential on $\mathbb{R}$ is considered. $\nu$ is fixed probability measure on $\R$ which satisfies two conditions:
\begin{align}
\nu\left((-\infty,0)\right) &=0 \label{nieujem}
\end{align}
and
\begin{align}
\lim_{x \rightarrow \infty} \frac{\nu\left([x,+\infty)\right)}{x^{-\alpha}L(x)} &=1 \  \label{asym}
\end{align}
for some $\alpha>0$ (a parameter of model) and a slowly varying function $L:(0,+\infty)\mapsto (0,+\infty)$.

  At time $t=0$ the system is initialized by single particle located at position $x=e$ (the system can be initialized with one particle at $0$, but the proof would be more technical). If at time $\tau$ a particle has a position $Y(\tau)$, then after time $h>0$ it has a position $Y(\tau+h)=Y(\tau)+\sum_{i \leq N(h)} \xi_i$ where $N(h)$ is Poisson process with an intensity $\lambda>0$ (a parameter of model), $(\xi_i)_{i \in \bbN}$ are i.i.d with a distribution $\nu$ ($\Pro(\xi_i \in A)=\nu(A)$). Also $N(h)$, $(\xi_i)_{i \in \bbN}$ and our process till time $t$ are independent. Observe that from \eqref{nieujem} it follows that a trajectory of a particle is a nondecreasing function.

\begin{remark}
Assume that $Z(t)=\sum_{i \leq N(t)} \xi_i$. It is an easy exercise to show that $Z(t+h)-Z(t)$ and $Z(h)$ have the same distribution. Therefore the above definition of the motion in our model is correct.
\end{remark}

%The trajectory of a particle is the compound Poisson process with intensity $\lambda$ and increments having distribution $\nu$.

 %Particles move accordingly to the compound Poisson process with the jump
%measure $\nu$ and branch with intensity
%\begin{equation} 
%f(x)=\left(\ln(x)\right)^\g, \ \g \in (0,1) \label{ga}
%\end{equation}
%(one should take $f(x)=\left(\ln(x+1)\right)^\g$ if we start from $x=0$).
The mechanism of branching is described as follows. Assume that at time $\tau$ a new particle $r$ is created. This particle receives a clock  $\mathcal{E}$, distributed like $Exp(1)$, which is independent of the model. Let us assume that the trajectory of $r$ is $Y(t)$. At time $\tau+T$ where
\begin{align}
T=\inf \left\{h>0\ | \ \int_{\tau}^{\tau+h} \log^\gamma(Y(u)) du = \mathcal{E}  \right\}, \label{ga}
\end{align}
where $\gamma \in (0,1)$ is a parameter of the model responsible for branching mechanism, $r$ produces one descendant at its location, which execute the same dynamics. Also one should take $\log^\gamma(x+1)$ if the system is initialized at $0$. At time $\tau+T$ all particles receive new independent clocks. All particles are independent.

Our process formally takes values in the space of all point measures. Denote by $X(t)$ a set of all particles at time $t$ (we will refer to $X(t)$ as our process). So $|X(t)|$ is the number of particles at time $t$ ($|A|$ is a cardinality of set A). For any $r \in X(t)$ by $Y_r(t)$ denote a position of $r$ at time $t$. We sometimes shortcut it to $Y(t)$ if $r$ is fixed. Let
\begin{align} 
M(t)=\sup\{Y_r(t) \ | \ r \in X(t) \} \label{M1} 
\end{align}
be a position of top-most-one particle.
The main goal of this chapter is the following theorem.
\begin{theorem}\label{glowne} 
There exist constants $K_3,K_4>0$ such that
$$K_3\geq\limsup_{t \rightarrow \infty} \frac{\log M(t)}{t^{\wy}}\ge 
\liminf_{t \rightarrow \infty} \frac{\log M(t)}{t^{\wy}} \geq K_4 \ a.s.$$
It can be taken $K_3=(\frac{2}{\alpha})^\wy$ and $K_4=(\frac{2^{1-\gamma}-1}{4D\alpha})^\wy$ where $D$ is a constant which depends only on a measure $\nu$ and is introduced in Lemma \ref{miara}.
\end{theorem} 
We start with two technical lemmas. The first one follows directly by the Karamata representation theorem (see  \cite[Corollary 2.1]{Res}).

\begin{lemma} \label{wolnozmieniajace}
Let L be a slowly varying function. Then for every $\eta>0$ and large enough $x$ we have $L(x)\le x^{\eta}$ and $L(x)\geq x^{-\eta}$.
\end{lemma}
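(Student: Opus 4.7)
The plan is to invoke the Karamata representation theorem, to which the author defers. Recall that it asserts any slowly varying $L$ admits, for $x$ larger than some $a$, the form
\[
L(x) = c(x) \exp\left( \int_a^x \frac{\epsilon(u)}{u}\, du \right),
\]
with $c(x) \to c_0 \in (0,\infty)$ and $\epsilon(u) \to 0$ as $u \to \infty$.

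Fix $\eta > 0$. I would pick a threshold $x_0 \geq a$ large enough so that simultaneously $|\epsilon(u)| < \eta/2$ for $u \geq x_0$ and $c(x)$ lies between two fixed positive constants for $x \geq x_0$. Splitting the integral at $x_0$ and estimating crudely gives
\[
\left| \int_{x_0}^x \frac{\epsilon(u)}{u}\, du \right| \leq \frac{\eta}{2}\,\log\frac{x}{x_0}
\]
for $x \geq x_0$, while the contribution of $\int_a^{x_0}\frac{\epsilon(u)}{u}\,du$ is a finite constant. Collecting these, one arrives at a bound of the shape $K^{-1} x^{-\eta/2} \leq L(x) \leq K x^{\eta/2}$ for some constant $K$ depending on $\eta$ (and on $L$, $a$, $x_0$).

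The final step is to absorb $K$ into a slightly larger power of $x$: since $K x^{\eta/2} \leq x^\eta$ once $x \geq K^{2/\eta}$, and analogously $K^{-1} x^{-\eta/2} \geq x^{-\eta}$ for such $x$, both inequalities hold for all sufficiently large $x$, as required. There is no real obstacle; the only minor care is the two-stage bookkeeping of first bounding $L$ by $x^{\pm \eta/2}$ up to a constant, then using the spare margin between $\eta/2$ and $\eta$ to swallow that constant.
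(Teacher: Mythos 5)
Your proof is correct and takes the same route as the paper, which simply cites the Karamata representation theorem (Corollary 2.1 of Resnick) without writing out any details; you have merely filled in the standard bookkeeping. The two-stage argument --- first bounding $L(x)$ by $K x^{\pm\eta/2}$ via the representation, then absorbing $K$ using the spare margin --- is exactly the intended deduction and is carried out correctly.
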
 

\begin{lemma}\label{miara} 
There exists a constant $D=D(\nu)>0$ such that for any $n,k \in \bbN$ the following inequality holds
$$\nu\left((e^{n+k}-e^n,\infty)\right) \ge e^{-(n+k)D\alpha}$$
where $\alpha$ is the same as in (\ref{asym}).
\end{lemma}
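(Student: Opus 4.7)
The plan is to replace the awkward threshold $e^{n+k}-e^n$ by the cleaner $e^{n+k}$ using monotonicity of $\nu$, and then to read off the bound from the tail asymptotic~(\ref{asym}) combined with the Karamata-type estimate in Lemma~\ref{wolnozmieniajace}. The key trivial observation is $e^{n+k}-e^n<e^{n+k}$, which gives $(e^{n+k}-e^n,\infty)\supseteq[e^{n+k},\infty)$, so it is enough to bound $\nu([e^{n+k},\infty))$ from below.

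\textbf{Carrying out the estimate.} By (\ref{asym}) there exists $x_0$ such that $\nu([x,\infty))\ge \tfrac{1}{2}x^{-\al}L(x)$ for all $x\ge x_0$. Applying Lemma~\ref{wolnozmieniajace} with $\eta=\al$, there is $x_1$ with $L(x)\ge x^{-\al}$ for $x\ge x_1$. Plugging $x=e^{n+k}$ into both bounds I obtain, for $e^{n+k}\ge\max(x_0,x_1)$,
$$\nu\bigl((e^{n+k}-e^n,\infty)\bigr)\;\ge\;\nu\bigl([e^{n+k},\infty)\bigr)\;\ge\;\tfrac{1}{2}\,e^{-\al(n+k)}\,L(e^{n+k})\;\ge\;\tfrac{1}{2}\,e^{-2\al(n+k)}.$$
Taking $D=3$, the inequality $\tfrac{1}{2}e^{-2\al(n+k)}\ge e^{-3\al(n+k)}$ holds as soon as $\al(n+k)\ge\log 2$, so the claim is established for every $(n,k)$ with $n+k$ exceeding some threshold $N_0=N_0(\nu)$.

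\textbf{Finite exceptions.} Only finitely many pairs $(n,k)$ with $n,k\ge 1$ satisfy $n+k<N_0$. The asymptotic (\ref{asym}) forces $\nu$ to have unbounded support, and since $x\mapsto\nu([x,\infty))$ is nonincreasing, every half-line $(y,\infty)$ has strictly positive $\nu$-mass; in particular $\nu((e^{n+k}-e^n,\infty))>0$ for each of the finitely many exceptional pairs. Enlarging $D$ from $3$ to a larger absolute constant if necessary, one can make $e^{-D\al(n+k)}$ fall below the (finite) minimum of these positive numbers, so the bound holds uniformly in all $n,k\ge 1$.

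\textbf{Main difficulty.} There really isn't one: this is a routine Potter/Karamata-type estimate. The only subtlety worth flagging is that the lemma is asserted for \emph{every} $n,k\ge 1$ rather than asymptotically, which is why the final absorption of the boundary cases into the constant $D$ is done as a separate step.
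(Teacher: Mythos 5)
Your proof is correct and follows essentially the same route as the paper's: apply the tail asymptotic (\ref{asym}) together with Lemma \ref{wolnozmieniajace} to get the bound for all large $n+k$, then absorb the finitely many remaining pairs into the constant $D$ using positivity of the tail. The only cosmetic difference is that you first replace the threshold $e^{n+k}-e^n$ by $e^{n+k}$ via monotonicity before invoking (\ref{asym}), whereas the paper applies it at $e^{n+k}-e^n$ directly; both are fine.
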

\begin{proof}
For an arbitrary $\eta>0$  (\ref{asym}) and Lemma \ref{wolnozmieniajace} implies
$$\nu\left((e^{n+k}-e^n,\infty)\right) \geq (1-\varepsilon)(\ee)^{-\alpha}(\ee)^{-\eta}$$
for large enough $n+k$. For the rest of $n,k$ (there are only finite number of them) the above measure is bounded away from $0$. So there exists a constant $\bar{C}$ such that 
$$\nu\left((e^{n+k}-e^n,\infty)\right) \geq \bar{C} \cdot (\ee)^{-\alpha-\eta}$$
for any $n,k \in \bbN$. Now the proof follows by a simple calculations.
\end{proof}

\subsection{Bound from below} The proof strategy is to map the model to the one studied in the previous section. To this end we divide particles into groups $X_J(t)=\{ r \in X(t)\ | \ Y_r(t) \in [e^J,e^{J+1}) \}$. The intuition is that a particle in $X_J$ behaves similarly to a particle on the $J$-th line in the model investigated in the previous chapter. Let us start with two propositions concerning the intensity of reproduction and jumping in our model. In Propositions \ref{rozmnazanie} and \ref{skoki} we fix $t_0>0$ and denote $\bbQ_J(\ \cdot \ )= \Pro(\ \cdot \ |X_J(t_0), \{|X_J(t_0)|>0\})$.
\begin{proposition}\label{rozmnazanie}
Assume that $r\in X_J(t_0)$, and let $T$ be time which it needs to split. Then $T$ is stochastically smaller than $Exp\left(J^\gamma\right)$.
\end{proposition}
\begin{proof}
Denote the position of $r$ by $Y(t)$. From the assumption $Y(t_0) \geq e^J$. By \eqref{nieujem} $Y(t)$ is nondecreasing function so
$$\int_{t_0}^{t_0+h} \log^\gamma(Y(s)) ds \ge \int_{t_0}^{t_0+h} \log^\gamma(e^J) ds= hJ^\gamma. $$
Let us take $\mathcal{E}\stackrel{d}{=}Exp(1)$ which is independent of our process. We have that (cf. \eqref{ga})
%\begin{align*}
$$\bbQ_J\left(T \leq h \right)=\bbQ_J\left(\int_{t_0}^{t_0+h}\log^\gamma(Y(s)) ds>\mathcal{E} \right) \ge \bbQ_J\left(hJ^\gamma >\mathcal{E}\right)=1-e^{-hJ^\gamma}.$$ \hspace{0.001cm}
%\ \ \ \ \ \ \ \  \ensuremath{_\blacksquare}  %\label{inten}
%\end{align*}
\end{proof}

\begin{proposition} \label{skoki}
Assume that $r \in X_J(t_0)$. Take $T=\inf\{h\in \R_+ \ | \ Y(t_0+h) \geq e^{J+k}  \}=\inf\{ h \in \R_+ \ | \ r \in \bigcup_{m=k} X_{J+m}(t_0+h)\}$. Then $T$ is stochastically smaller  than $Exp\left(e^{-(J+k)D\alpha}\right)$ and $D$ is a constant which depends only on the jump measure $\nu$.
\end{proposition}
\begin{proof}
By assumptions $Y(t_0) \geq e^J$. Let us decompose the process $h\mapsto Y(t_0+h)-Y(t_0)$ into 
\begin{align*}
Y^1(h)=&\sum_{i< N(h)} \xi_i \textbf{1}_{\xi_i \geq e^{J+k}-e^J}\\
\intertext{ and }
Y^2(h)=&\sum_{i\leq N(h)} \xi_i \textbf{1}_{\xi_i \leq e^{J+k}-e^J}.
\end{align*}
From \eqref{nieujem} the process $Y^2(h)$ is non-negative. Thus if $Y^1(h) \geq e^{J+k} - e^J$ then $Y(t_0+h)\geq e^{J+k}$. 

\begin{align}
\bbQ_J &\left( Y(t_0+h) \geq e^{J+k}  \right) \ge \bbQ_J \left( Y^1(h)\geq e^{J+k} - e^J \right)=\bbQ_J(Y^1(h)>0). \label{box} 
\end{align}
Elementary calculations give $\inf \{h \in \R_+ \ | \ Y^1(h)>0 \} $ is distributed like \\
$Exp\left(\lambda \nu(e^{J+k}-e^J,\infty)\right)$, which by Lemma~\ref{miara} is stochastically smaller than \newline
$Exp\left(\lambda e^{-(J+k)D\alpha}\right)$. 
\end{proof}

We are ready to prove the lower bound in Theorem \ref{glowne}. Define $$\widetilde{X}(t)=(|X_1(t)|,|X_2(t)|,\ldots).$$
 If $r \in X_J(t)$ then we say that $r$ is on the $J$-th line in the $\widetilde{X}(t)$ model. Denote $\widetilde{M}(t)=\sup\{J \in \bbN \ | \ |X_J(t)|>0 \}$.

 It is obvious that
\begin{equation}
 \log M(t) \geq \widetilde{M}(t) \ a.s \label{oczywistosc}
\end{equation}
 (cf. \eqref{M1}).

Let us consider the model introduced in Section $2$ with $\beta=\lambda (1-e^{-D\alpha})$, $C=e^{-D\alpha}$, a birth intense parameter $\gamma$ (the same as in \eqref{ga})  and name it $\bar{X}(t)=(\bar{X}_1(t),\bar{X}_2(t),\ldots)$ (let us note that for any $J\in \bbN$, $\bar{X}_J(t)$ is a number while $X_J(t)$ is a set). Take $\bar{M}(t)=\sup \{J\in \bbN \ | \ \bar{X}_J(t)>0 \}$. 

By Propositions \ref{rozmnazanie} a particle on the $J$-th line in the $\widetilde{X}(t)$ model reproduces faster than a particle on the $J$-th line in the  $\bar{X}(t)$ model (discrete model). Moreover, Proposition \ref{skoki} implies that a particle on the $J$-th line in the $\widetilde{X}(t)$ model needs stochastically less time to reach $J+k$-th line or higher than a particle on the $J$-th line in the $\bar{X}(t)$ model to  reach $J+k$-th line or higher.

% Also \ref{skoki} implies $\inf \{ h>0 \ | \ r \in \bigcup_{m=k} X_{J+m}(t+h) \}$ is stochastically smaller then time a particle from J-th line in model $\bar{X}(t)$ needs to reach $(J+k)-th$ line or higher, independently of $t$ and $k$. 
%Informally the particles in model $\widetilde{X}(t)$ jumps higher than those in $\bar{X}(t)$ model. 
We conclude these observations as follows: $\widetilde{M}(t)$ is stochastically larger than $\bar{M}(t)$. That combined with \eqref{oczywistosc} gives
$$\liminf_{t \rightarrow \infty} \frac{\log(M(t))}{t^{\wy}} \stackrel{a.s}{\geq} \liminf_{t \rightarrow \infty}\frac{\widetilde{M}(t)}{t^{\wy}} \stackrel{a.s}{\geq} \liminf_{t \rightarrow \infty} \frac{\bar{M}(t)}{t^{\wy}} \stackrel{a.s}{\geq} K_4. $$
Last inequality comes from Theorem \ref{th1}.

\subsection{Bound from above}
The aim of this section is to prove the bound from above in Theorem \ref{glowne}. The strategy of proof is analogical to that in Subsection  \ref{rozd}. Before the counterpart of Lemma \ref{3.1} will be formulated we need an additional lemma in which a trajectory of the first particle in our model is studied. Define $s_J=J^{1-\gamma} (\frac{\alpha}{2}-\varepsilon)$ where $\varepsilon>0$ is arbitrary.

\begin{lemma} \label{ruch}
Let $Y(t)=e+\sum_{i\leq N(t)}\xi_i$ where $N(t)$ is the Poisson process, with intensity $\lambda$, and $(\xi_i)_{i\in \bbN}$ are i.i.d random variables, independent of $N(t)$, distributed like $\nu$. Then for every $\eta, q>0$ there exists $J_0=J_0(q,\eta)$ such that for $J>J_0$
$$\Pro(Y(s_J)>e^J)\leq e^{-J(\alpha-\eta)}\cdot J^{1+\alpha}+q^{J}.$$
\end{lemma}

\begin{proof}
The following simple calculations are true for any $J\geq 2$
\begin{align}
&\Pro(Y(s_J)>e^J) = \sum_{i=1}^{\infty} \Pro\left(Y(s_J)>e^J \ | N(s_J)=i \right) \Pro\left( N(s_J)=i\right) \nonumber \\
&=\sum_{i=1}^{\infty} \Pro \left( \sum_{j=1}^i \ \xi_j >e^J-e\right) e^{-s_J \lambda} \frac{(s_J \lambda)^i}{i!} \le \sum_{i=1}^\infty \Pro\left( \ \exists_{j\le i} \  \xi_j > \frac{e^J}{2i}\right)e^{-\lambda s_J} \frac{(s_J \lambda)^i}{i!} \nonumber \\
&\le  \sum_{i=1}^J i \Pro\left(\xi_1 \ge \frac{e^J}{2i}\right)e^{-\lambda s_J} \frac{(s_J \lambda)^i}{i!} + \sum_{i=J+1}^\infty e^{-\lambda s_J} \frac{(s_J \lambda)^i}{i!}=S_1+S_2. \label{S_1+S_2}
\end{align}
For large enough $J$, $S_2$ can be estimated using the Markov inequality with test function $e^{x}$ to obtain
\begin{equation}
S_2 \leq e^{(e-1)\lambda s_J-J}\leq q^J \label{drugikaw}
\end{equation}
where $q$ is a constant which can be made arbitrary close to $0$ since  $s_J=J^{1-\gamma} (\frac{\alpha}{2}-\varepsilon)$.

Now we estimate $S_1$.
\begin{equation}
S_1=\sum_{i=1}^J i \ \frac{\nu\left((\frac{e^J}{2i},\infty )\right)}{\lambda} e^{-\lambda s_J} \frac{(s_J \lambda)^i}{i!} \le 
\sum_{i=1}^J \frac{2i}{\lambda} \left(\frac{e^J}{2i} \right)^{-\alpha} L\left(\frac{e^J}{2i} \right)  e^{-\lambda s_J} \frac{(s_J \lambda)^i}{i!} \nonumber
\end{equation}
where (\ref{asym}) for large enough $J$ is used in the last inequality. Now by Lemma \ref{wolnozmieniajace} applied to a slowly varying function $\frac{2}{\lambda} L(x)$ the following holds for arbitrary $\eta>0$ and large enough $J$

\begin{align}
S_1 &\le  \sum_{i=1}^J i \left(\frac{e^J}{2i} \right)^{-\alpha+\eta} e^{-\lambda s_J} \frac{(s_J \lambda)^i}{i!} \le\frac{ e^{-J(\alpha-\eta)}\cdot J^{\alpha-\eta+1}}{2^{-\alpha+\eta}} \sum_{i=1}^J  e^{-\lambda s_J} \frac{(s_J \lambda)^i}{i!} \nonumber \\
&\le e^{-J(\alpha-\eta)}\cdot J^{1+\alpha}. \label{pierwszykaw}
\end{align}
 The inequalities \eqref{S_1+S_2}, \eqref{pierwszykaw} and \eqref{drugikaw} imply the thesis.
\end{proof}

\begin{lemma} \label{ostatnie}
The following series converges $\sum \Pro\left(M(s_J) > e^J\right) < \infty$.
\end{lemma}

\begin{proof}
Fix $J \in \bbN$. Let us consider a branching process  $\widetilde{X}(t)$  defined in the same way as $X(t)$ in Subsection \ref{aaa}, except that any particle above level $e^J$ does not breed. Formally if $r \in \widetilde{X}(t)$ and $T=\inf\{s\in R_+\ | \ s>t, \ Y_r(s)>e^J \}$ then $r$ stops reproducing at time $T$. 

Define $\widetilde{M}(t)=\sup \{ Y_r(t) \ | \ r\in \widetilde{X}(t) \}$.
It is obvious that
\begin{equation}
\Pro\left(M(s_J)>e^J\right) = \Pro \left(\widetilde{M}(s_J) >e^J \right). \label{5.1}
\end{equation}

Let us write
\begin{align}
\Pro\left( \widetilde{M}(s_J) >e^J \right) &\le \Pro\left(\widetilde{M}(s_J) >e^J, \ |\widetilde{X}(s_J)| < \lceil e^{\frac{\alpha}{2}J} \rceil \right) + \Pro\left(|\widetilde{X}(s_J)| \ge \lceil e^{\frac{\alpha}{2}J} \rceil\right) \nonumber \\
&=I_1+I_2. \label{suma}
\end{align}
Analogously as in Proposition \ref{rozmnazanie} it can be shown that if $r\in \widetilde{X}(t)$ then time needed for $r$ to reproduce is stochastically larger than $Exp(J^\gamma)$ (if $Y_r(t)>e^J$ then this time is infinite, which is still stochastically larger then $Exp(J^\gamma)$). Let $H(t)$ be the $GW(J^\gamma)$ process. The previous observation implies that  $|\widetilde{X}(t)|\stackrel{st}{\leq} H(t)$. 
So by the Markov inequality and Proposition \ref{3.0}
\begin{equation} \label{oszacowanieI2}
I_2 \le \frac{\bbE |\widetilde{X}(s_J)|}{\lceil e^{\frac{\alpha}{2}J} \rceil} \le \frac{e^{ J^\gamma s_J}}{e^{\frac{\alpha}{2}J}}= e^{-\varepsilon J}.
\end{equation}
 $I_1$ can be estimated using the conditional expectation 
\begin{multline*}
	I_1 \le \bbE\left[ \textbf{1}_{\{|\widetilde{X}(s_J)|<\lceil e^{\frac{\alpha}{2}J} \rceil \} } \sum_{r \in \widetilde{X}(s_J)} \textbf{1}_{\{ Y_{r}(s_J)>e^J \}} \right] \\=  \bbE\left[ \textbf{1}_{\{|\widetilde{X}(s_J)|<\lceil e^{\frac{\alpha}{2}J} \rceil \} } \sum_{r \in \widetilde{X}(s_J)} \bbE \left[\left.\textbf{1}_{\{ Y_{r}(s_J)>e^J \}}\right| |\widetilde{X}(s_J)|\right] \right].
\end{multline*}
Let us observe that the conditional expectation presented above does not depend on $r$. It comes from the following fact: if at some time $\tau$ a particle $r$ produces another particle $\bar{r}$, then at time $t$ they are indistinguishable. Thus we obtain

\begin{align}
	I_1 \leq \lceil e^{\frac{\alpha}{2}J} \rceil \Pro (Y(s_J)>e^J) \label{5.4}
\end{align}
where $Y$ has distribution as in Lemma \ref{ruch}.
 Lemma \ref{ruch} and \eqref{5.4}  imply that for an arbitrary $\eta>0$ and large enough $J$

\begin{equation} 
I_1 \leq \lceil e^{\frac{\alpha}{2}J} \rceil \left(e^{-J(\alpha-\eta)}\cdot J^{1+\alpha}+ q^{J} \right). \label{oszacowanieI1}
\end{equation}

From \eqref{suma}, \eqref{oszacowanieI2} and \eqref{oszacowanieI1} we have
$$
\Pro\left(M(s_J)>e^J\right) \le e^{\lceil \frac{\alpha}{2}J \rceil} \left( e^{-J(\alpha-\eta)}\cdot J^{1+\alpha}+ q^J \right) +e^{-\varepsilon J}.
$$
Obviously $\sum e^{-\varepsilon J} < \infty$.  If $\eta < \alpha$ and $q$ is such that $e^{\alpha/2} \cdot q<1$ then also \\ $\sum  e^{\lceil \frac{\alpha}{2}J \rceil} \left( e^{-J(\alpha-\eta)}\cdot J^{1+\alpha}+  q^J \right) <\infty$, and the proof of Lemma is concluded. 
\end{proof}
We are ready to prove the bound from above in Theorem \ref{glowne}. The Borel-Cantelli Lemma and Lemma \ref{ostatnie} imply that  $M(s_J)<e^J$ for $J$ sufficiently large a.s. By analogous argument as in Section \ref{rozd} we obtain 
$$\liminf_{t \rightarrow \infty} \frac{\log M(t)}{t^\wy} \leq \left( \frac{1}{\frac{\alpha}{2} -\varepsilon } \right)^\wy \ a.s.$$
Since $\varepsilon>0$ was arbitrary, the upper bound in Theorem \ref{glowne} is obtained.

\textbf{Acknowledgment}: I would like to thank dr Piotr Mi{\l{}}o\'s and Katarzyna Kmieć for all comments which improved the presentation of the proof.

\begin{bibdiv}
\begin{biblist}
\bib{A-N}{book}{
   author={Athreya, Krishna B.},
   author={Ney, Peter E.},
   title={Branching processes},
   note={Die Grundlehren der mathematischen Wissenschaften, Band 196},
   publisher={Springer-Verlag, New York-Heidelberg},
   date={1972},
   pages={xi+287},
   review={\MR{0373040}},
}

\bib{cat}{article}{
   author={Bocharov, Sergey},
   author={Harris, Simon C.},
   title={Branching Brownian motion with catalytic branching at the origin},
   journal={Acta Appl. Math.},
   volume={134},
   date={2014},
   pages={201--228},
   issn={0167-8019},
   review={\MR{3273694}},
   doi={10.1007/s10440-014-9879-y},
}

%b&#322;&#261;dzenie na Z z niestala insensywnoscia
\bib{Boch1}{article}{
   author={Bocharov, Sergey},
   author={Harris, Simon C.},
   title={Branching random walk in an inhomogeneous breeding potential},
   conference={
      title={S\'eminaire de Probabilit\'es XLVI},
   },
   book={
      series={Lecture Notes in Math.},
      volume={2123},
      publisher={Springer, Cham},
   },
   date={2014},
   pages={1--32},
   review={\MR{3330812}},
}

%klasyczny rezultat
\bib{bram}{article}{
   author={Bramson, Maury D.},
   title={Maximal displacement of branching Brownian motion},
   journal={Comm. Pure Appl. Math.},
   volume={31},
   date={1978},
   number={5},
   pages={531--581},
   issn={0010-3640},
   review={\MR{0494541}},
}
%Ruch Browna+intensywno&#347;&#263; podzia&#322;u |y|^p
\bib{HarHar}{article}{
   author={Harris, J. W.},
   author={Harris, S. C.},
   title={Branching Brownian motion with an inhomogeneous breeding
   potential},
   language={English, with English and French summaries},
   journal={Ann. Inst. Henri Poincar\'e Probab. Stat.},
   volume={45},
   date={2009},
   number={3},
   pages={793--801},
   issn={0246-0203},
   review={\MR{2548504}},
   doi={10.1214/08-AIHP300},
}

\bib{Res}{book}{
   author={Resnick, Sidney I.},
   title={Heavy-tail phenomena},
   series={Springer Series in Operations Research and Financial Engineering},
   note={Probabilistic and statistical modeling},
   publisher={Springer, New York},
   date={2007},
   pages={xx+404},
   isbn={978-0-387-24272-9},
   isbn={0-387-24272-4},
   review={\MR{2271424}},
}

%bramson prosciej
\bib{Roberts}{article}{
   author={Roberts, Matthew I.},
   title={A simple path to asymptotics for the frontier of a branching
   Brownian motion},
   journal={Ann. Probab.},
   volume={41},
   date={2013},
   number={5},
   pages={3518--3541},
   issn={0091-1798},
   review={\MR{3127890}},
   doi={10.1214/12-AOP753},
}
\end{biblist}
\end{bibdiv}

\noindent
Rafa{\l} Meller\\
Institute of Mathematics\\
University of Warsaw\\
Banacha 2, 02-097 Warszawa, Poland\\
{\tt r.meller@mimuw.edu.pl}

\end{document}